\theoremstyle{plain}
\newtheorem{thm}{Theorem}[section]
\newtheorem{cor}[thm]{Corollary}
\newtheorem{lem}[thm]{Lemma}
\theoremstyle{definition}
\newtheorem{rem}[thm]{Remark}
\newtheorem{defin}[thm]{Definition}
\newtheorem*{3loc}{Theorem 3.3}
\newtheorem*{conj}{Conjecture}
\def\cal{{\rm cal}}
\def\diam{{\rm diam}}
\def\dist{{\rm dist}}
\begin{document}

\title[Local sections of Serre fibrations with 3-manifold fibers]
{Local sections of Serre fibrations with 3-manifold fibers}

\author{N. Brodskiy}
\address{University of Tennessee, Knoxville, TN 37996, USA}
\email{brodskiy@@math.utk.edu}

\author{A. Chigogidze}
\address{University of North Carolina at Greensboro, Greensboro, NC
27402, USA} \email{chigogidze@uncg.edu}

\author{E.V.~Shchepin}
\address{Steklov Institute of Mathematics,
Russian Academy of Science, Moscow 117966, Russia}
\email{scepin@mi.ras.ru}

\thanks{The third author was partially supported by Russian Foundation
of Basic Research (project 08-01-00663).}

\keywords{Serre fibration; section; selection; approximation.}
\subjclass{Primary: 57N05, 57N10; Secondary: 54C65.}

\begin{abstract}
It was proved by H. Whitney in 1933 that a Serre fibration of
compact metric spaces admits a global section provided every fiber
is homeomorphic to the unit interval [0,1]. An extension of the
Whitney's theorem to the case when all fibers are homeomorphic to
some fixed compact two-dimensional manifold was proved by the
authors~\cite{BCS}. The main result of this paper proves the existence of
local sections in a Serre fibration with all fibers homeomorphic
to some fixed compact three-dimensional manifold.
\end{abstract}

\maketitle

\section{Introduction}\label{S:intro}

The following problem is one of the central problems in geometric
topology~\cite{DS}. Let $p\colon E\to B$ be a Serre fibration of
separable metric spaces. Assume that the space $B$ is locally
$n$-connected and all fibers of $p$ are homeomorphic to some fixed
$n$-dimensional manifold $M^n$. Is $p$ a locally trivial
fibration?

In case $n=1$ an affirmative answer to this problem follows from
results of H.~Whitney~\cite{Whitney}.

\begin{conj}[Shchepin]
A Serre fibration with a locally arcwise connected metric base is
locally trivial if every fiber of this fibration is homeomorphic
to some fixed manifold $M^n$ of dimension $n \le 4$.
\end{conj}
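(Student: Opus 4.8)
The plan is to follow the two-step pattern behind the settled cases $n=1$ (Whitney) and $n=2$ (\cite{BCS}): first extract \emph{local sections} of $p$, and then assemble enough compatible local sections into a \emph{local trivialization}. Fix $b_0\in B$; since $B$ is locally arcwise connected we may restrict to an arbitrarily small arcwise connected neighborhood $U$ of $b_0$, and a preliminary task --- using that the fibers are honest $n$-manifolds --- is to promote this to whatever finite-dimensional local connectivity the later steps demand. The aim over $U$ is then a homeomorphism $p^{-1}(U)\cong U\times M^n$ commuting with the projections.

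\emph{Step 1: local sections.} Given $x_0\in p^{-1}(b_0)$, I want a section $s\colon U\to E$ with $s(b_0)=x_0$. The engine is a controlled approximation argument (in the spirit of Michael's selection theorems, made quantitative): triangulate $U$ finely, define $s$ arbitrarily on the vertices (inside the appropriate fibers), and extend inductively over the skeleta, using the Serre lifting property to push the partial section across each simplex; the point is to keep the successive corrections small, which forces one to understand how $p^{-1}(b)$ sits near $p^{-1}(b_0)$ as $b$ varies. For $n\le 2$ this is carried out in \cite{BCS}. For $n=3$ --- the contribution of the present paper --- I would use the structure theory of compact $3$-manifolds: cut $M^{3}$ along the spheres and tori of its prime and JSJ decompositions, reducing the selection problem to pieces that are handlebodies, $I$-bundles, or Seifert-fibered or hyperbolic manifolds, on which the fibers are rigid enough (via the geometric structure, or by collapsing onto a $2$-dimensional spine) to invoke the one- and two-dimensional results fiberwise; patching the partial sections over the pieces, reconciling them near the cutting surfaces by another use of the lifting property, then yields the section. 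This is what Theorem~4.9 should provide.

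\emph{Step 2: from sections to trivialization, and the main obstacle.} A trivialization is not a single section but a fiberwise homeomorphism $U\times M^n\to p^{-1}(U)$, that is, a selection of the bundle $b\mapsto\Homeo(M^n,p^{-1}(b))$. The natural route is a parametrized version of Step~1 over the nerve of a small cover of $U$: choose compatible local identifications, measure the transition maps in $\Homeo(M^n)$, and correct them using contractibility of $\Homeo(M^n)$ in the relevant range together with the local connectivity of $B$. The genuinely hard part is this \emph{parametrized rigidity} of the fibers. For $n=3$ it rests on deep parametrized rigidity of $3$-manifolds --- the Smale conjecture of Hatcher together with its extensions built on geometrization --- to control families of self-homeomorphisms, so I expect this line of argument to deliver the conjecture for $n\le 3$ only once Step~2 is built on top of the local sections produced here; for $n=4$ there is neither a classification of the possible fibers nor a usable description of $\Homeo(M^4)$, so the $n=4$ case looks genuinely out of reach by these methods and should be regarded as the principal remaining obstruction.
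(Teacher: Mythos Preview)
The statement you are attempting to prove is labeled a \emph{Conjecture} in the paper and is not proved there; the paper's contribution is only the local-sections result (Theorem~3.3) for $n=3$, i.e.\ roughly your Step~1, and nothing toward your Step~2. So there is no ``paper's own proof'' to compare against, and your proposal should be read as a program, not a proof. Even as a program it has two substantial gaps.

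First, your Step~1 for $n=3$ is not the paper's method and, as stated, does not work. The paper never touches prime or JSJ decompositions, geometric structures, or reduction to 2-dimensional spines. Instead it exploits that $p$ is topologically regular (Hamstrom, using Poincar\'e) to show that $p^{-1}$ is equi locally hereditarily coconnectedly aspheric, equi locally coconnected, and equi-$LC^1$ (Lemma~2.14); it then runs a multivalued selection argument --- compact approximately connected filtrations built out of small balls in the fibers, coconnectified to kill higher homotopy (Lemmas~2.9, 2.13, 3.1), iterated and passed to a limit (Theorems~2.11, 3.2). Your plan to cut each fiber along its canonical spheres and tori presupposes that these cutting surfaces vary continuously with $b$, which is exactly the kind of fiberwise coherence one only gets \emph{after} local triviality; without it there is no consistent decomposition of $p^{-1}(U)$ to which the lower-dimensional results apply. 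The paper's approach avoids this circularity by working with purely local, metric data in each fiber (small balls are aspherical in $\mathbb R^3$), never invoking the global topology of $M^3$. Also, there is no Theorem~4.9 in this paper; the local-sections theorem is 3.3.

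Second, your Step~2 is, as you essentially concede, the whole remaining conjecture. Passing from local sections to a local trivialization requires controlling families in $\Homeo(M^n)$, and the paper says explicitly that this Homeomorphism Group problem is equivalent (via Shchepin's argument) to the conjecture itself; invoking Hatcher and geometrization-based extensions is a reasonable aspiration for $n=3$, but it is not carried out here, and for $n=4$ you correctly note it is out of reach. So the honest status is: the paper supplies Step~1 for $n=3$ by a method different from yours, and the conjecture remains open.
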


In dimension $n=1$ the Shchepin's Conjecture is proved even for
non-compact fibers~\cite{RSS}. Shchepin proved that positive
solution of this Conjecture in dimension $n$ implies positive
solutions of both CE-problem and Homeomorphism Group problem in
dimension $n$~\cite{S, DS}. Since $CE$-problem was solved in a
negative way by A.N.~Dranishnikov, there are dimensional
restrictions in Shchepin's Conjecture.

Under the assumption of the base $B$ of the Serre fibration $p\colon E\to B$
being finite dimensional the Shchepin's Conjecture is proved in
dimensions $n=2$~\cite{HD} and $n=3$~\cite{H}.
An interesting result is obtained by S.~Ferry proving that $p$ is a Hurewicz fibration~\cite{F}.

The first step toward proving the Shchepin's Conjecture in
dimension $n=2$ over infinite dimensional base is made in~\cite{B, BCS} 
where existence of local and global
sections of the fibration is proved provided the base space is an $ANR$. 
The following theorem is the main result of this paper.

\begin{3loc}
Let $p\colon E\to B$ be a Serre fibration of $LC^2$-compacta with
all fibers homeomorphic to some fixed compact three-dimensional
manifold. If $B\in ANR$, then any section of $p$ over closed
subset $A\subset B$ can be extended to a section of $p$ over some
neighborhood of $A$.
\end{3loc}

Our strategy of constructing a section of a Serre fibration is as
follows (definitions are given in Section~\ref{S:Preliminaries}).
We consider the inverse (multivalued) mapping $p^{-1}$ and find
its compact submapping admitting continuous approximations. Then
we take very close continuous approximation and use it to find
again a compact submapping with small diameters of fibers
admitting continuous approximations. When we continue this process
we get a sequence of compact submappings with diameters of fibers
tending to zero. This sequence will converge to the desired
singlevalued submapping of $p^{-1}$ (section of $p$).

The major difference of the proof in this paper from the one in~\cite{BCS}
comes from the fact that any open subset of Euclidean plane is aspheric
(all homotopy groups vanish in dimensions $\ge 2$) which is far from being true in Euclidean 3-space.
In order to apply our technique to 3-dimensional
manifolds we introduce a new property called hereditarily
coconnected asphericity.

\section{Preliminaries on spaces and multivalued mappings}\label{S:Preliminaries}

Let us recall some definitions and introduce our notations. All
spaces will be separable metrizable. We equip the product $X\times
Y$ with the metric
$$\dist_{X\times Y}((x,y),(x',y'))=\dist_X(x,x')+\dist_Y(y,y').$$
By $O(x,\varepsilon)$ we denote the open
$\varepsilon$-neighborhood of the point $x$.

A multivalued mapping $F\colon A\to Y$ is called {\it submapping}
(or {\it selection}) of multivalued mapping $G\colon X\to Y$ if $A$ is a subspace of $X$ and
$F(x)\subset G(x)$ for every $x\in A$. The {\it gauge} of a
multivalued mapping $F\colon X\to Y$ is defined as $\cal
(F)=\sup\{\diam F(x)\mid x\in X\}$. The {\it graph} of multivalued
mapping $F\colon X\to Y$ is the subset $\Gamma_F=\{(x,y)\in
X\times Y\mid y\in F(x)\}$ of the product $X\times Y$. For
arbitrary subset $\mathcal U\subset X\times Y$ denote by $\mathcal
U(x)$ the subset $pr_Y(\mathcal U\cap(\{x\}\times Y))$ of $Y$.
Then for the graph $\Gamma_F$ we have $\Gamma_F(x)=F(x)$.

A multivalued mapping $G\colon X\to Y$ is called {\it complete} if
there exists a $G_\delta$-set $S\subset X\times Y$ containing the
graph $\Gamma_G$ such that all sets $\{x\}\times G(x)$ are closed
in $S$. Notice that any compact-valued mapping is complete. A
multivalued mapping $F\colon X\to Y$ is called {\it upper
semicontinuous} if for any open set $U\subset Y$ the set $\{x\in
X\mid F(x)\subset U\}$ is open in $X$. A {\it compact} mapping is
an upper semicontinuous multivalued mapping with compact images of
points.

Let $Z$ be a space. A sequence $\{Z_k\}$ (finite or infinite) of
subspaces
$$ Z_0\subset Z_1\subset Z_2\subset\dots\subset Z $$
is called a {\it  filtration} of $Z$.
The number of elements of the filtration is called {\it length} of the filtration.
Given a filtration $\{Z_k\}$ of $Z$, its {\it subfiltration} $\{Z'_k\}$ is a filtration of $Z$ such that $Z'_k\subset Z_k$ for every $k$.
A sequence of multivalued
mappings $\{F_k\colon X\to Y\}$ is called a {\it filtration of
multivalued mapping} $F\colon X\to Y$ if for any $x\in X$ the
sequence $\{F_k(x)\}$ is a filtration of $F(x)$. We say that a
filtration of multivalued mappings $G_i\colon X\to Y$ is {\it
compact} if every mapping $G_i$ is compact.

A pair of spaces $V\subset U$ is called {\it $n$-aspheric} if
every continuous mapping of the $n$-sphere into $V$ is homotopic
to a constant mapping in $U$. We assume that {\it $-1$-asphericity}
of the pair $V\subset U$ means exactly that $U$ is non-empty.
A pair of compacta $K\subset K'$ is
called {\it approximately $n$-aspheric} if for some embedding of
$K'$ into $ANR$-space for any neighborhood $U$ of the set $K'$
there is a neighborhood $V$ of the set $K$ such that the pair
$V\subset U$ is $n$-aspheric. A compact space $K$ is called {\it
approximately aspheric} if the pair $K\subset K$ is approximately
$n$-aspheric for every $n\ge 2$.

A pair of spaces $V\subset U$ is called {\it polyhedrally
$n$-connected} if for any finite $n$-dimensional polyhedron $M$
and its closed subpolyhedron $A$ any mapping of $A$ in $V$ can be
extended to a map of $M$ into $U$.

\begin{defin}
We say that a subset $A$ of a space $Z$ is {\it coconnected} if
the complement $Z\setminus A$ is connected.

A pair $V\subset U$ of proper subsets of a space $Z$ is called
{\it coconnected} if $Z\setminus U$ lies in one connected
component of $Z\setminus V$.
\end{defin}

\begin{defin}
We call a space $Z$ {\it hereditarily coconnectedly aspheric}
if any non-separating compactum $K\subset Z$ is approximately aspheric.

A space $Z$ is said to be {\it locally hereditarily coconnectedly aspheric}
if any point $z\in Z$ has a hereditarily coconnectedly aspheric neighborhood.
\end{defin}

\begin{rem}\label{remark hereditarily coconnectedly aspheric lemma}
An important example of hereditarily coconnectedly aspheric space
is Euclidean 3-space~\cite[Lemma~2.4]{DS}. Therefore, any
3-dimensional manifold is locally hereditarily coconnectedly
aspheric.
\end{rem}

Now we consider different properties of pairs of spaces and define the
corresponding local properties for spaces and multivalued maps.
We follow definitions and notations from~\cite{DM}.

\begin{defin}
An ordering $\alpha$ of the subsets of a space $Y$ is {\it proper} provided:
\begin{itemize}
\item[(a)] If $W\alpha V$, then $W\subset V$;
\item[(b)] If $W\subset V$, and $V\alpha R$, then $W\alpha R$;
\item[(c)] If $W\alpha V$, and $V\subset R$, then $W\alpha R$.
\end{itemize}
\end{defin}

We are going to use the following proper orderings:
\begin{itemize}
\item[(1)] $V\alpha U$ means $U$ is non-empty and $V\subset U$;

\item[(2)] $V\alpha U$ means the pair $V\subset U$ is $k$-aspheric
for every $k\le n$;

\item[(3)] $V\alpha U$ means the pair $V\subset U$ is polyhedrally
$n$-connected;

\item[(4)] $V\alpha U$ means the pair $V\subset U$ is coconnected;

\item[(5)] $V\alpha U$ means $V$ is hereditarily coconnectedly
aspheric.
\end{itemize}

\begin{defin}
Let $\alpha$ be a proper ordering. A space $Y$ is {\it locally of
type $\alpha$} if, whenever $y\in Y$ and $V$ is a neighbourhood of
$y$, then there a neighbourhood $W$ of $y$ such that $W\alpha V$.
\end{defin}

Let us introduce terminology for spaces which are locally of type
$\alpha$ for the examples of proper orderings $\alpha$ described
above:

\begin{itemize}
\item[(1)] any space is locally of type $\alpha$;

\item[(2)] $X$ is locally of type $\alpha$ means $X$ is {\it
locally $n$-connected} (notation: $X\in LC^n$);

\item[(3)] $X$ is locally of type $\alpha$ means $X$ is {\it
locally polyhedrally $n$-connected};

\item[(4)] $X$ is locally of type $\alpha$ means $X$ is {\it
locally coconnected};

\item[(5)] $X$ is locally of type $\alpha$ means $X$ is {\it
locally hereditarily coconnectedly aspheric}.
\end{itemize}

We use the word "equi" for local properties of multivalued maps.

\begin{defin}
Let $\alpha$ be a proper ordering. A multivalued mapping $F\colon
X\to Y$ is {\it equi locally of type $\alpha$} if for any points
$x\in X$ and $y\in F(x)$ and for any neighbourhood $V$ of $y$ in
$Y$ there exist neighbourhoods $W$ of $y$ in $Y$ and $U$ of $x$ in
$X$ such that $(W\cap F(x'))\alpha(V\cap F(x'))$ provided $x'\in
U$.
\end{defin}

Let us introduce terminology for multivalued mappings which are
equi locally of type $\alpha$ for the examples of proper orderings
$\alpha$ described above:

\begin{itemize}
\item[(1)] $F$ is equi locally of type $\alpha$ means $F$ is {\it
lower semicontinuous};

\item[(2)] $F$ is locally of type $\alpha$ means $F$ is {\it equi
locally $n$-connected} (briefly, $F$ is equi-$LC^n$);

\item[(3)] $F$ is locally of type $\alpha$ means $F$ is {\it equi
locally polyhedrally $n$-connected};

\item[(4)] $F$ is locally of type $\alpha$ means $F$ is {\it equi
locally coconnected};

\item[(5)] $F$ is locally of type $\alpha$ means $F$ is {\it equi
locally hereditarily coconnectedly aspheric}.
\end{itemize}

The following Lemma is easy to prove~\cite[Lemma 2.7]{BCS}.

\begin{lem} \label{lempolcont}
Any equi-$LC^n$ multivalued mapping is equi locally polyhedrally
$n$-continuous.
\end{lem}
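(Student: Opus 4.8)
The plan is to reduce the statement to a standard obstruction-theoretic induction on skeleta, exactly as one proves that an $LC^n$-space is $C^n$ in the Kuratowski sense, but carried out uniformly over the base so that the "equi" quantifiers are respected. Fix a point $x\in X$, a point $y\in F(x)$, and a neighbourhood $V$ of $y$. We must produce neighbourhoods $W\ni y$ and $U\ni x$ so that for every $x'\in U$ and every finite $n$-dimensional polyhedron $M$ with subpolyhedron $A$, any map $A\to W\cap F(x')$ extends to a map $M\to V\cap F(x')$.

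The key steps, in order, are as follows. First, iterate the equi-$LC^n$ hypothesis $n+1$ times to obtain a descending chain of neighbourhoods $V=V_{-1}\supset V_0\supset V_1\supset\dots\supset V_n\ni y$ together with a single neighbourhood $U\ni x$ such that for each $k\in\{0,\dots,n\}$ and each $x'\in U$ the pair $\bigl(V_k\cap F(x')\bigr)\subset\bigl(V_{k-1}\cap F(x')\bigr)$ is $k$-aspheric; shrinking $U$ finitely many times causes no problem since there are only finitely many steps. Set $W=V_n$. Second, fix $x'\in U$ and a pair $(M,A)$ with $\dim M\le n$ and a map $f\colon A\to W\cap F(x')$. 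Choose a triangulation of $M$ and extend $f$ over the skeleta of $M$ one dimension at a time: having extended over $A\cup M^{(k-1)}$ with image in $V_{k-1}\cap F(x')$... actually, to keep the bookkeeping clean one first pushes $f$ itself into $V_n\cap F(x')$ (it is already there) and then extends inward; the cleanest route is to extend over $A\cup M^{(0)}$ into $V_{n-1}\cap F(x')$ using $(-1)$-asphericity (non-emptiness, valid since $F(x')$ meets every $V_k$ because $y$ does — here one uses that $U$ was chosen so this holds, or simply notes $x'$ ranges over a small enough $U$), then over $A\cup M^{(k)}$ into $V_{n-1-k}\cap F(x')$ by filling each $k$-cell: its attaching map $S^{k-1}\to V_{n-k}\cap F(x')$ is null-homotopic in $V_{n-k-1}\cap F(x')$ by $(k-1)$-asphericity... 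I would instead index so that the $k$-skeleton lands in $V_{n-k}$ and each newly attached $k$-cell uses the $(k-1)$-aspheric pair $V_{n-k}\subset V_{n-k-1}$. After $n+1$ steps the whole of $M$ is mapped into $V_{-1}\cap F(x')=V\cap F(x')$, extending $f$, which is exactly polyhedral $n$-connectedness of the pair $\bigl(W\cap F(x')\bigr)\subset\bigl(V\cap F(x')\bigr)$.

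The main obstacle — and the only place any care is needed — is the uniformity: one must extract a \emph{single} neighbourhood $U$ of $x$ that works simultaneously for all $n+1$ asphericity steps and for all $x'\in U$, and one must ensure that each finite polyhedron $M$ can be triangulated so that the skeletal induction terminates in exactly $\dim M+1\le n+1$ stages regardless of $M$. Both are handled by the finiteness of $n$: intersect the finitely many $U$'s, and use the dimension bound to cap the induction. The asphericity-to-null-homotopy-to-cell-extension passage (using the cone on $S^{k-1}$ as the $k$-cell) is routine and I would not belabour it; likewise the observation that $W\cap F(x')\neq\emptyset$ for $x'$ near $x$, needed to start the induction over the $0$-skeleton, follows from lower semicontinuity of $F$ or directly from the $(-1)$-aspheric (non-emptiness) clause built into the chain. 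This is precisely the argument of \cite[Lemma~2.7]{BCS}, and I would simply reference that for the details after laying out the skeleton above.
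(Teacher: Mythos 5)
Your proposal is correct: the uniform skeletal induction over a chain of $n+1$ nested neighbourhoods, with a single $U$ obtained by intersecting the finitely many neighbourhoods of $x$, is exactly the standard argument here, and your final indexing (new vertices placed via $(-1)$-asphericity, $k$-cells filled via the $(k-1)$-aspheric pair) is sound. The paper itself gives no proof, simply declaring the lemma ``easy to prove'' and citing \cite[Lemma~2.7]{BCS}, so your write-up supplies precisely the argument that citation stands for.
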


\begin{cor} \label{lempolconnect}
Any $LC^n$ space $X$ is locally polyhedrally $n$-connected.
\end{cor}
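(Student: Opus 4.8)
The plan is to obtain this as an immediate consequence of Lemma~\ref{lempolcont}, applied to a suitably degenerate multivalued mapping. The underlying observation is that, for any proper ordering $\alpha$, a space $Y$ being ``locally of type $\alpha$'' is literally the same as the multivalued mapping $F\colon\{*\}\to Y$ with $F(*)=Y$ being ``equi locally of type $\alpha$'': the domain is a single point, so the quantifier over $x'$ in the definition of an equi local property collapses, and the condition $(W\cap F(*))\,\alpha\,(V\cap F(*))$ becomes just $W\,\alpha\,V$. So I would let $\{*\}$ be the one-point space and consider $F\colon\{*\}\to X$, $F(*)=X$.

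First I would verify that $F$ is equi-$LC^n$ precisely because $X\in LC^2$ — more precisely, because $X\in LC^n$ in the sense of ordering~(2). Given the unique point $*$, a point $y\in F(*)=X$, and a neighbourhood $V$ of $y$ in $X$, one must produce neighbourhoods $W$ of $y$ in $X$ and $U$ of $*$ in $\{*\}$ with $(W\cap F(*))\,\alpha\,(V\cap F(*))$ for all $*\in U$. Since $F(*)=X$, this is exactly the demand that the pair $W\subset V$ be $k$-aspheric for every $k\le n$, which is precisely what the hypothesis $X\in LC^n$ supplies; and $U=\{*\}$ is the only choice, so the clause ``for all $*\in U$'' is vacuous. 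Hence $F$ is equi-$LC^n$.

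Now Lemma~\ref{lempolcont} gives that $F$ is equi locally polyhedrally $n$-connected. Spelling this out at the point $*$ and $y\in F(*)=X$: for every neighbourhood $V$ of $y$ in $X$ there is a neighbourhood $W$ of $y$ in $X$ (again with $U=\{*\}$) such that $(W\cap F(*))\subset(V\cap F(*))$ is a polyhedrally $n$-connected pair, i.e. the pair $W\subset V$ is polyhedrally $n$-connected. Since $y\in X$ and $V$ were arbitrary, this is exactly the assertion that $X$ is locally polyhedrally $n$-connected, which is the statement of the corollary.

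I do not expect any genuine obstacle here; the whole content has been packaged into Lemma~\ref{lempolcont}, and the corollary is the bookkeeping remark that ``space'' equals ``multivalued map out of a point.'' The only points requiring a little care are the handling of the degenerate domain (ensuring the universal quantifier over $x'\in U$ really does trivialize) and the boundary conventions for asphericity — in particular that $-1$-asphericity means nonemptiness — so that the hypotheses built into ``polyhedrally $n$-connected'' (including the extension problem when the subpolyhedron $A$ is empty or misses some components of $M$) are honestly provided by the $LC^n$ assumption.
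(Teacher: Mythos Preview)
Your proposal is correct and follows exactly the paper's own argument: consider the multivalued map from a one-point space onto $X$ and apply Lemma~\ref{lempolcont}. The paper's proof is the same two-line reduction; you have simply unpacked the definitions more explicitly (and you should correct the stray ``$LC^2$'' to ``$LC^n$'').
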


\begin{proof}
Consider a multivalued mapping from the one-point space onto $X$
and apply Lemma~\ref{lempolcont}.
\end{proof}

The following lemma is easy to prove. We
will use it with different properties $\alpha$ in
Section~\ref{34}.

\begin{lem}\label{L32}
Let $\alpha$ be a proper ordering. Let $F\colon X\to Y$ be a
multivalued mapping of compactum $X$ to a metric space $Y$.
Suppose that $F$ is equi locally of type $\alpha$ and contains a
compact submapping $\Psi$. Then for any $\varepsilon>0$ there
exists a positive number $\delta$ such that for every point
$(x,y)\in O(\Gamma_\Psi,\delta)$ we have $(O(y,\delta)\cap
F(x))\alpha (O(y,\varepsilon)\cap F(x))$.
\end{lem}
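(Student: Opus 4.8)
The plan is a routine compactness argument resting on the three axioms of a proper ordering. First I would record that $\Gamma_\Psi$ is a compact subset of $X\times Y$: since $\Psi$ is an upper semicontinuous compact-valued mapping on the compactum $X$, the union $K=\bigcup_{x\in X}\Psi(x)$ is compact (cover each $\Psi(x)$ by finitely many basic opens and use upper semicontinuity plus compactness of $X$), so $\Gamma_\Psi$ is a closed subset of the compactum $X\times K$. For every $q=(a,b)\in\Gamma_\Psi$ we have $b\in\Psi(a)\subset F(a)$, so the equi-local-type-$\alpha$ hypothesis may be applied at the pair $(a,b)$.

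Fix $\varepsilon>0$. For each $q=(a,b)\in\Gamma_\Psi$ apply the definition of ``equi locally of type $\alpha$'' with $V=O(b,\varepsilon/2)$ to get a neighbourhood $W$ of $b$ in $Y$ and a neighbourhood $U$ of $a$ in $X$ with $(W\cap F(x'))\,\alpha\,(O(b,\varepsilon/2)\cap F(x'))$ for all $x'\in U$. Choose radii $\rho_q,\sigma_q>0$ with $O(b,\rho_q)\subset W$, $O(a,\sigma_q)\subset U$ and, shrinking further, $\rho_q<\varepsilon/8$. By property (b) of a proper ordering the relation survives the replacement of $W$ by the smaller set $O(b,\rho_q)$, so
$$(O(b,\rho_q)\cap F(x'))\,\alpha\,(O(b,\varepsilon/2)\cap F(x'))\qquad\text{for all }x'\in O(a,\sigma_q).$$
The open sets $O(a,\sigma_q/2)\times O(b,\rho_q/2)$, $q\in\Gamma_\Psi$, cover the compactum $\Gamma_\Psi$; extract a finite subcover indexed by $q_1,\dots,q_N$, writing $q_i=(a_i,b_i)$, $\rho_i=\rho_{q_i}$, $\sigma_i=\sigma_{q_i}$, and set $\delta=\tfrac18\min\{\sigma_1,\dots,\sigma_N,\rho_1,\dots,\rho_N\}$.

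Now take an arbitrary $(x,y)\in O(\Gamma_\Psi,\delta)$ and choose $(a,b)\in\Gamma_\Psi$ with $\dist_X(x,a)+\dist_Y(y,b)<\delta$, together with an index $i$ such that $(a,b)\in O(a_i,\sigma_i/2)\times O(b_i,\rho_i/2)$. Then $\dist_X(x,a_i)<\delta+\sigma_i/2\le\sigma_i$, so $x\in O(a_i,\sigma_i)$, whence $(O(b_i,\rho_i)\cap F(x))\,\alpha\,(O(b_i,\varepsilon/2)\cap F(x))$. Also $\dist_Y(y,b_i)<\delta+\rho_i/2$, which together with $\delta\le\rho_i/8$ and $\rho_i<\varepsilon/8$ gives the inclusions $O(y,\delta)\cap F(x)\subset O(b_i,\rho_i)\cap F(x)$ and $O(b_i,\varepsilon/2)\cap F(x)\subset O(y,\varepsilon)\cap F(x)$. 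Feeding the first inclusion and the $\alpha$-relation into property (b) yields $(O(y,\delta)\cap F(x))\,\alpha\,(O(b_i,\varepsilon/2)\cap F(x))$, and then property (c) together with the second inclusion yields $(O(y,\delta)\cap F(x))\,\alpha\,(O(y,\varepsilon)\cap F(x))$, which is what we wanted.

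The proof is mostly bookkeeping; the one point that needs care is calibrating the radii so that, at an \emph{arbitrary} point $(x,y)$ of the $\delta$-neighbourhood, the ball $O(b_i,\rho_i)$ about a finite-subcover centre is still wide enough to swallow $O(y,\delta)$ while $O(b_i,\varepsilon/2)$ stays inside $O(y,\varepsilon)$ — this is exactly why one shrinks $\rho_q$ below $\varepsilon/8$ and passes to half-radius product neighbourhoods before invoking compactness. Only properties (b) and (c) of the proper ordering are used; property (a) is not needed.
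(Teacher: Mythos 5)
Your proof is correct, and there is nothing in the paper to compare it against: the authors state Lemma~\ref{L32} without proof, remarking only that it ``is easy to prove.'' Your compactness argument --- covering the compact graph $\Gamma_\Psi$ by product neighbourhoods furnished by the equi-local-type-$\alpha$ hypothesis, passing to a finite subcover with halved radii, and then using properties (b) and (c) of a proper ordering to transfer the relation from the cover centres $(a_i,b_i)$ to an arbitrary $(x,y)\in O(\Gamma_\Psi,\delta)$ --- is exactly the routine argument the authors had in mind, and your calibration of the radii ($\rho_q<\varepsilon/8$, half-radius cover, $\delta=\tfrac18\min$) makes the two ball inclusions go through without trouble.
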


A filtration $\{F_i\}$ of multivalued maps is called {\it equi
locally connected} if for any $i$ the mapping $F_i$ is equi
locally $i$-connected. A filtration of multivalued maps $\{F_i\}$
is called {\it polyhedrally connected} if every pair
$F_{i-1}(x)\subset F_{i}(x)$ is polyhedrally $i$-connected. A
filtration of compact mappings $\{F_m\colon X\to Y\}$ is called
{\it approximately connected} if for any point $x\in X$ and for
any $k$ the pair $F_k(x)\subset F_{k+1}(x)$ is approximately
$k$-aspheric.

The following Lemma is a weak form of Compact Filtration Lemma
from~\cite{SB}.

\begin{lem}\label{L31}
Any polyhedrally connected equi locally connected finite
filtration of complete mappings of a compact space contains a
compact approximately connected subfiltration of the same length.
\end{lem}

\begin{defin}
A multivalued mapping $F\colon X\to Y$ {\it admits continuous
approximations} if every neighborhood of the graph $\Gamma_F$ in
$X\times Y$ contains a graph of some single-valued continuous map
$f\colon X\to Y$.
\end{defin}

\begin{thm} \label{thmapprox}
Suppose that a compact multivalued mapping of separable metric
ANRs $F\colon X\to Y$ admits a compact approximately connected
filtration of infinite length. Then for any compact space
$K\subset X$ every neighborhood of the graph $\Gamma_F(K)$
contains the graph of a single-valued and continuous mapping
$f\colon K\to Y$.
\end{thm}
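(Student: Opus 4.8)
The plan is to obtain $f$ by a controlled, dimension-by-dimension extension. Given a sufficiently fine open cover of $K$, I would build a map into $Y$ on the nerve of the cover by induction on skeleta, arranging that on the $k$-dimensional skeleton the map takes values in a small neighbourhood of the values of the $k$-th term $F_k$ of the filtration; the approximate $k$-asphericity of the pairs $F_k(x)\subset F_{k+1}(x)$ is precisely what lets the map be pushed from the $k$-skeleton across the $(k+1)$-cells. The map $f$ is then assembled from such nerve maps --- one of them suffices when $\dim K<\infty$, and a limit of them is needed in general.

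The local mechanism, which I will call the \emph{one-step extension}, is as follows. Let $L$ be a finite simplicial complex with a point $x_\sigma\in K$ attached to each simplex $\sigma$ so that faces get nearby points, and let $h\colon L^{(k)}\to Y$ be continuous with $(x_\sigma,h(t))$ in a prescribed neighbourhood of $\Gamma_{F_k}(K)$ for all $t\in\sigma$ with $\dim\sigma\le k$. For a $(k+1)$-simplex $\sigma$ the restriction of $h$ to $\partial\sigma$ is a map of a $k$-sphere into a small neighbourhood of $F_k(x_\sigma)$ (upper semicontinuity of $F_k$ is used here to see that values lying near $F_k(x_\tau)$ for the faces $\tau<\sigma$ also lie near $F_k(x_\sigma)$, since $x_\tau$ is close to $x_\sigma$); approximate $k$-asphericity of $F_k(x_\sigma)\subset F_{k+1}(x_\sigma)$ gives a null-homotopy of this sphere inside a prescribed neighbourhood of $F_{k+1}(x_\sigma)$, and coning it off extends $h$ over $\sigma$, hence over $L^{(k+1)}$ one level up. Starting from a choice $h(v)\in F_0(x_v)$ on the vertices and iterating, one converts a fine cover $\omega$ of $K$ --- with nerve $N$, a subordinate partition of unity, canonical map $\kappa\colon K\to|N|$ and reference points $x_\sigma$ --- into a continuous $g\colon|N|\to Y$; then $f_\omega=g\circ\kappa\colon K\to Y$ is continuous and, if $\omega$ is fine enough, $\Gamma_{f_\omega}$ lies in the given neighbourhood $\mathcal W$ of $\Gamma_F(K)$.

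When $\dim K=n<\infty$, taking $\omega$ of order $\le n+1$ involves only $F_0,\dots,F_n$, and a single $f_\omega$ proves the theorem. In general no single finite cover works --- when $\dim K=\infty$ a cover of small mesh must have a high-dimensional nerve --- so I would first replace $\mathcal W$ by a slightly smaller neighbourhood whose closure is still contained in $\mathcal W$, fix a refining sequence $\omega_1,\omega_2,\dots$ of finite covers with mesh tending to $0$, and construct continuous maps $f_j\colon K\to Y$ with $\Gamma_{f_j}$ in the smaller neighbourhood and $\dist(f_j,f_{j+1})<2^{-j}$, each $f_j$ coming from the skeletal procedure applied to $\omega_j$ and re-using, level by level, the maps already built so as to stay close to them. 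The infinite length of the filtration is exactly what this requires: at stage $j$ the terms $F_0,\dots,F_{\dim N_j}$ are used, and the dimensions $\dim N_j$ grow without bound. The limit $f=\lim_j f_j$ is then continuous with $\Gamma_f\subset\mathcal W$.

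The main obstacle is assembling the control data, i.e.\ upgrading the hypothesis --- which only gives approximate $k$-asphericity of each individual pair $F_k(x)\subset F_{k+1}(x)$ --- to one-step extensions whose estimates are uniform in $x\in K$ and mutually compatible over all the simplices of all the nerves involved. This is where the compactness of $K$, the compactness of the values and the upper semicontinuity of the $F_m$ enter, and in the infinite-dimensional case it has to be organised so that the inductive limit converges while every stage keeps the graph inside $\mathcal W$.
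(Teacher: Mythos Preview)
The paper does not prove this theorem: it is stated in the preliminaries (Section~\ref{S:Preliminaries}) as a known ingredient, with no argument given. The natural source is the authors' earlier work on selections of filtered multivalued mappings, in particular \cite{SB} (and the related \cite{BCK}). So there is no proof in the paper against which to compare your proposal.

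That said, your sketch is precisely the standard route by which such approximation theorems are established in that literature: one passes to the nerve of a fine finite cover of $K$, builds a map into $Y$ by induction on skeleta --- sending vertices into $F_0$, and at the $(k+1)$-st step filling each $(k+1)$-simplex using the approximate $k$-asphericity of $F_k(x_\sigma)\subset F_{k+1}(x_\sigma)$ together with upper semicontinuity to absorb the discrepancy between the reference points of faces --- and then composes with the canonical map $K\to|N|$. For $\dim K<\infty$ a single nerve suffices; for $\dim K=\infty$ one arranges a Cauchy sequence of such maps over a refining sequence of covers and takes the limit. The point you flag as the main obstacle --- upgrading the pointwise approximate asphericity to estimates uniform in $x$ and compatible across all simplices --- is indeed the technical core, and is exactly what the compactness of $K$ and of the filtration terms is used for in \cite{SB}. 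Your outline is faithful to that argument.
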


If a pair $G_0\subset G_1$ of proper subsets of a space $Z$ is
coconnected, then we can define an operation of {\it
$G_1$-coconnectification} on subsets of $G_0$ as follows: for a
subset $F_0\subset G_0$ its $G_1$-coconnectification is the union
of $F_0$ and all components of $Z\setminus F_0$ which do not
intersect $Z\setminus G_1$. Clearly, the $G_1$-coconnectification
of $F_0$ is the minimal subset $F_1\subset G_1$ containing $F_0$
such that $Z\setminus F_1$ is connected and contains $Z\setminus
G_1$.

\begin{lem} \label{lemma closedness of coconnectification}
Suppose that $Z$ is a locally 0-connected space and $G_0\subset
G_1$ is a coconnected pair of proper subsets of $Z$. Let $F_0$ be
a subset of $G_0$. If $F_0$ is closed in $Z$ then the
$G_1$-coconnectification $F_1$ of $F_0$ is also closed in $Z$.
\end{lem}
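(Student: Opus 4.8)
The plan is to show that the complement $Z\setminus F_1$ is open, equivalently that $F_1$ is closed. Recall $F_1 = F_0 \cup \mathcal{C}$, where $\mathcal{C}$ is the union of all connected components of $Z\setminus F_0$ that miss $Z\setminus G_1$; equivalently $Z\setminus F_1$ is the union of those components of $Z\setminus F_0$ that do meet $Z\setminus G_1$. Since $F_0$ is closed, $Z\setminus F_0$ is open, and since $Z$ is locally $0$-connected (locally connected), the components of the open set $Z\setminus F_0$ are themselves open. Therefore $Z\setminus F_1$, being a union of some of these open components, is open, which gives that $F_1$ is closed.

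Let me lay out the steps in order. First I would fix notation: write $U = Z\setminus F_0$ (open), decompose $U = \bigsqcup_{j} C_j$ into its connected components, and partition the index set as $J = J' \sqcup J''$ where $j \in J''$ iff $C_j \cap (Z\setminus G_1) \neq \varnothing$. Second, I would invoke local connectedness: in a locally connected space, connected components of an open set are open, so each $C_j$ is open in $Z$. Third, I would identify $Z\setminus F_1$ with $\bigcup_{j\in J''} C_j$ directly from the definition of $G_1$-coconnectification (the components thrown into $F_1$ are exactly those indexed by $J'$, so what remains outside $F_1$ is exactly the union over $J''$). Fourth, a union of open sets is open, so $Z\setminus F_1$ is open and $F_1$ is closed.

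The only step that requires any care — and the one I would flag as the main (mild) obstacle — is the passage from "$Z$ locally $0$-connected" to "components of the open set $U$ are open." This is a standard fact, but it is precisely where the hypothesis is used, so I would state it explicitly: for $x$ in a component $C_j$, a connected open neighborhood of $x$ must lie inside $C_j$ by maximality of the component, hence $C_j$ is a neighborhood of each of its points. Everything else is bookkeeping with the definition of coconnectification; no topology beyond this is needed, and in particular the coconnectedness hypothesis on the pair $G_0\subset G_1$ is only used to guarantee that $F_1$ is well-defined as described (it does not enter the closedness argument itself).
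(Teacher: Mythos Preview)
Your argument is correct and is genuinely different from the paper's. The paper argues by contradiction with sequences: it takes $z_n\in F_1$ converging to $z\in Z\setminus F_1$, reduces to $z_n\notin F_0$, observes that $z$ and $z_n$ lie in different components of $Z\setminus F_0$, and then uses local $0$-connectedness to produce arbitrarily short paths from $z$ to $z_n$; each such path must hit $F_0$, forcing $z$ to be a limit point of $F_0$, a contradiction with $F_0$ closed. Your approach instead uses local $0$-connectedness only to deduce that $Z$ is locally connected, hence that the components of the open set $Z\setminus F_0$ are open; since $Z\setminus F_1$ is a union of such components, it is open. Your route is shorter, avoids any metric/sequential considerations, and makes transparent that the coconnectedness of the pair $G_0\subset G_1$ plays no role in the closedness statement. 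One small remark: in this paper ``locally $0$-connected'' means $LC^0$ (a local path-connectedness condition), which is strictly stronger than ``locally connected''; your parenthetical equates the two, but all you need is the (easy) implication $LC^0\Rightarrow$ locally connected, and you should state it that way.
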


\begin{proof}
Suppose that $\{z_n\}$ is a sequence of points in $F_1$ converging
to a point $z\in Z\setminus F_1$. If infinitely many points $z_n$
belong to $F_0$, then $z\in F_0$ by closedness of $F_0$. So, we
assume that $z_n \notin F_0$ for all $n$. Then for any $n$ the
points $z$ and $z_n$ belong to different connected components of
$Z\setminus F_0$. Use local 0-connectedness of $Z$ to find a path
$w_n$ from $z$ to $z_n$ such that the diameter of $w_n$ tends to
$0$ as $n\to \infty$. Since each path $w_n$ must intersect $F_0$,
the point $z$ is a limit point of $F_0$. Contradiction.
\end{proof}

If a multivalued mapping $F\colon X\to Y$ contains proper
submappings $G_0$ and $G_1$ such that for any $x\in X$ the pair
$G_0(x)\subset G_1(x)$ is coconnected in $F(x)$, then for any
submapping $F_0\subset G_0$ we define a {\it
$G_1$-coconnectification} of $F_0$ as a multivalued mapping taking
a point $x\in X$ to the $G_1(x)$-coconnectification of $F_0(x)$.

\begin{lem} \label{lemmacocon}
Suppose that equi-$LC^0$ multivalued mapping $F\colon X\to Y$
contains proper submappings $G_0\subset G_1$ such that $G_1$ is
compact and for any $x\in X$ the pair $G_0(x)\subset G_1(x)$ is
coconnected in $F(x)$. Then for any compact submapping $F_0\subset
G_0$ its $G_1$-coconnectification $F_1$ is a compact submapping of
$G_1$.
\end{lem}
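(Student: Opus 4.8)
The plan is to verify three things about $F_{1}$: that it is a submapping of $G_{1}$, that each value $F_{1}(x)$ is compact, and that the graph $\Gamma_{F_{1}}$ is closed in $X\times Y$; upper semicontinuity of $F_{1}$ will then be a formal consequence. That $F_{1}(x)\subset G_{1}(x)$ is immediate from the description of the coconnectification, since each component of $F(x)\setminus F_{0}(x)$ that misses $F(x)\setminus G_{1}(x)$ is contained in $G_{1}(x)$ and $F_{0}(x)\subset G_{0}(x)\subset G_{1}(x)$. I will use two preliminary observations throughout. First, $F$ is lower semicontinuous: the $(-1)$-asphericity clause in the definition of equi-$LC^{0}$ says exactly that $V\cap F(x')\neq\emptyset$ for $x'$ near $x$. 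Second, restricting the equi-$LC^{0}$ condition to the single fibre over $x$ shows each $F(x)$ is $LC^{0}$ (locally $0$-connected), hence components of open subsets of $F(x)$ are open and path connected. Finally note that since $G_{1}$ is a compact mapping its graph is closed, and since $G_{1}$ is a proper submapping we have $F(x)\setminus G_{1}(x)\neq\emptyset$ for all $x$.

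For the compactness of $F_{1}(x)$ I would apply Lemma~\ref{lemma closedness of coconnectification} with $Z=F(x)$: this space is locally $0$-connected, $G_{0}(x)\subset G_{1}(x)$ is a coconnected pair of proper subsets, and $F_{0}(x)$, being compact, is closed in $F(x)$; hence $F_{1}(x)$ is closed in $F(x)$. Since $F_{1}(x)\subset G_{1}(x)\subset F(x)$, it is then closed in the compactum $G_{1}(x)$, and so compact.

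The substantial step is closedness of $\Gamma_{F_{1}}$. Take $(x_{n},y_{n})\in\Gamma_{F_{1}}$ with $x_{n}\to x$ and $y_{n}\to y$; the goal is $y\in F_{1}(x)$. Since $y_{n}\in G_{1}(x_{n})$ and $G_{1}$ has closed graph, $y\in G_{1}(x)\subset F(x)$. If $y\in F_{0}(x)$ we are done, so assume $y\notin F_{0}(x)$. By the coconnectedness of $G_{0}(x)\subset G_{1}(x)$ in $F(x)$ together with $F_{0}(x)\subset G_{0}(x)$, the non-empty set $F(x)\setminus G_{1}(x)$ lies in one component $D$ of $F(x)\setminus F_{0}(x)$, and $F(x)\setminus F_{1}(x)=D$; so it suffices to show $y\notin D$. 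Suppose, for a contradiction, that $y\in D$, and pick a path $\gamma$ in $D\subset F(x)\setminus F_{0}(x)$ from $y$ to a point $q\in F(x)\setminus G_{1}(x)$. Set $K=\gamma([0,1])$, a compact connected subset of $F(x)\setminus F_{0}(x)$ containing $q$, and let $3\rho=\dist(K,F_{0}(x))>0$ and $3\sigma=\dist(q,G_{1}(x))>0$. The key technical fact — which I would prove by the standard covering and subdivision argument from lower semicontinuity and equi-$LC^{0}$ of $F$, in the spirit of Lemma~\ref{L32} — is that there are $\varepsilon\in(0,\sigma]$ and a neighbourhood $U$ of $x$ such that for every $x'\in U$ all points of $F(x')\cap O(K,\varepsilon)$ lie in a single component of $F(x')\cap O(K,\rho)$. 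Shrinking $U$ using upper semicontinuity of $F_{0}$ and of $G_{1}$, we may further assume $F_{0}(x')\cap O(K,\rho)=\emptyset$ and $O(q,\varepsilon)\cap G_{1}(x')=\emptyset$ for $x'\in U$. By lower semicontinuity of $F$ choose $q_{n}\in F(x_{n})\cap O(q,\varepsilon)$; for $x_{n}\in U$ this gives $q_{n}\in F(x_{n})\setminus G_{1}(x_{n})$, and trivially $q_{n}\in O(K,\varepsilon)$. For $n$ large we have $x_{n}\in U$ and $y_{n}\in O(K,\varepsilon)$, so $y_{n}$ and $q_{n}$ lie in one component of $F(x_{n})\cap O(K,\rho)\subset F(x_{n})\setminus F_{0}(x_{n})$. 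Thus the component of $F(x_{n})\setminus F_{0}(x_{n})$ through $y_{n}$ meets $F(x_{n})\setminus G_{1}(x_{n})$, which forces $y_{n}\notin F_{1}(x_{n})$, contradicting $(x_{n},y_{n})\in\Gamma_{F_{1}}$. Hence $y\in F_{1}(x)$ and $\Gamma_{F_{1}}$ is closed.

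It remains to deduce upper semicontinuity. If $U\subset Y$ is open, $F_{1}(x_{0})\subset U$, but $\{x:F_{1}(x)\subset U\}$ is not a neighbourhood of $x_{0}$, pick $x_{n}\to x_{0}$ and $y_{n}\in F_{1}(x_{n})\setminus U\subset G_{1}(x_{n})$. Upper semicontinuity of $G_{1}$ with $G_{1}(x_{0})$ compact forces $\dist(y_{n},G_{1}(x_{0}))\to0$, so $G_{1}(x_{0})\cup\{y_{n}\}$ is compact and some subsequence $y_{n_{k}}$ converges to a point $y^{\ast}\in G_{1}(x_{0})\setminus U$; but closedness of $\Gamma_{F_{1}}$ gives $y^{\ast}\in F_{1}(x_{0})\subset U$, a contradiction. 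So $F_{1}$ is upper semicontinuous, and being compact valued and contained in $G_{1}$ it is a compact submapping of $G_{1}$. I expect the one real obstacle to be the connectedness-pushing step inside the proof that $\Gamma_{F_{1}}$ is closed: transporting the connection realised by $\gamma$ in the limit fibre $F(x)$ into the nearby fibres $F(x_{n})$ while keeping away from $F_{0}$ is exactly where the hypotheses equi-$LC^{0}$, lower semicontinuity, and compactness of $F_{0}$, $G_{1}$ and $[0,1]$ all come into play.
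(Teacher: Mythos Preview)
Your argument is correct and follows the same overall strategy as the paper: compactness of each $F_{1}(x)$ via Lemma~\ref{lemma closedness of coconnectification}, then failure of upper semicontinuity (equivalently, of graph closedness) is contradicted by taking a path in $F(x)\setminus F_{0}(x)$ from the bad limit point $y$ to a point of $F(x)\setminus G_{1}(x)$ and transporting that connection into nearby fibres $F(x_{n})$ using equi-$LC^{0}$ together with upper semicontinuity of $F_{0}$ and $G_{1}$.

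The one noteworthy difference is in how the transport is carried out. The paper lifts the path $s$ itself, producing paths $s_{i}\colon[0,1]\to F(x_{i})$ from $y_{i}$ to $z_{i}$ converging uniformly to $s$; it remarks that this follows from general continuous-selection results \`a la Michael, with an elementary but technical direct proof available. Your version avoids this by a tube argument: you only need that $F(x')\cap O(K,\varepsilon)$ lies in a single component of $F(x')\cap O(K,\rho)$, which follows from a finite chain-of-balls covering of the compact connected $K$ using equi-$LC^{0}$. This is a bit more self-contained and sidesteps the appeal to selection theory, at the cost of having to set up the constants $\rho,\sigma,\varepsilon$ carefully. Your separate derivation of upper semicontinuity from closedness of $\Gamma_{F_{1}}$ plus compactness of $G_{1}$ is also a cleaner bookkeeping choice than the paper's direct contradiction of upper semicontinuity.
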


\begin{proof}
Since the $G_1(x)$-coconnectification of the set $F_0(x)$ is
closed in $F(x)$ by Lemma~\ref{lemma closedness of
coconnectification} and is contained in $G_1(x)$, the set $F_1(x)$
is compact.

Let us prove that $F_1$ is upper semicontinuous. Suppose to the
contrary that for some point $y\in F(x)\setminus F_1(x)$ there is
a sequence $\{y_i\}_{i=1}^\infty$ of points converging to $y$ such
that $y_i$ belongs to a set $F_1(x_i)\setminus F_0(x_i)$ for some
$x_i\in X$. Fix a point $z\in F(x)\setminus G_1(x)$. The points
$y$ and $z$ belong to connected set $F(x)\setminus F_1(x)$ which
is open in $F(x)$ and therefore is locally path connected (since
$F(x)\in LC^0$). Hence, there exists a path $s\colon [0,1]\to
F(x)\setminus F_1(x)$ such that $s(0)=y$ and $s(1)=z$. Since $F$
is lower semicontinuous and $G_1$ is upper semicontinuous, there
is a sequence of points $\{z_i\in F(x_i)\setminus
G_1(x_i)\}_{i=M}^\infty$, converging to $z$.

Using equi-$LC^0$ property of the mapping $F$ we can choose a
sequence of maps $\{s_i\colon [0,1]\to F(x_i)\}_{i=M'}^\infty$
such that $s_i(0)=y_i$, $s_i(1)=z_i$ and the paths $s_i$ converge
to the path $s$ uniformly (this follows from general results on
continuous selections~\cite{Mi}, although an elementary proof
exists which is straightforward but too technical). Since the path
$s$ does not intersect $F_0(x)$ and $F_0$ is upper semicontinuous,
for all but finitely many $i$ the path $s_i$ does not intersect
$F_0(x_i)$. It means that the points $y_i$ and $z_i$ belong to the
same connected component of the set $F(x_i)\setminus F_0(x_i)$,
which contradicts to the choices of $y_i$ and $z_i$.
\end{proof}

\begin{defin}
The mapping $f \colon X \to Y$ is said to be {\it topologically regular}
provided that if $\varepsilon > 0$ and $y \in Y$,
then there is a positive number $\delta$ such that
$dist(y,y') < \delta$,  $y' \in Y$,  implies that
there is a homeomorphism of $f^{-1}(y)$ onto $f^{-1}(y')$
which moves no point as much as $\varepsilon$
(i.e. an $\varepsilon$-homeomorphism).
\end{defin}

Note that since the Poincare Conjecture is true, any Serre fibration of $LC^2$-compacta
with all fibers homeomorphic to some fixed compact three-dimensional manifold is topologically regular~\cite{H}.

\begin{lem} \label{lemmafibrprop}
If $p\colon E\to B$ is a topologically regular mapping of compacta
with all fibers homeomorphic to some fixed compact three-dimensional manifold, then
the multivalued mapping $p^{-1}\colon B\to E$ is
\begin{itemize}
\item equi locally hereditarily coconnectedly aspheric

\item equi locally coconnected

\item equi locally polyhedrally $2$-connected.
\end{itemize}
\end{lem}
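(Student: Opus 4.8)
The strategy is to reduce each of the three ``equi locally \ldots'' statements about the multivalued mapping $p^{-1}$ to the corresponding \emph{local} property of a single three-manifold fiber, using the $\varepsilon$-homeomorphisms supplied by topological regularity. Fix $b\in B$ and $e\in p^{-1}(b)$, and fix a metric on $E$. The manifold $F=p^{-1}(b)$ is compact and three-dimensional, hence it is locally hereditarily coconnectedly aspheric, locally coconnected, and (being an $LC^2$ space by Corollary~\ref{lempolconnect} applied to a manifold) locally polyhedrally $2$-connected; these are exactly the facts recorded in Remark~\ref{remark hereditarily coconnectedly aspheric lemma} together with the standard local structure of manifolds. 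So for a given neighbourhood $V$ of $e$ we may first choose a smaller neighbourhood $W_0$ of $e$ in $F$ such that the pair $W_0\subset V\cap F$ has the relevant property (hereditarily coconnectedly aspheric $W_0$; or $Z\setminus(V\cap F)$ in one component of $Z\setminus W_0$ for an appropriate ambient chart $Z$; or polyhedrally $2$-connected pair). The point of the argument is then to transport this across nearby fibers.

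The transport step goes as follows. Given $\varepsilon>0$ small enough that $O(e,\varepsilon)\cap F\subset W_0$, topological regularity yields $\delta>0$ so that for every $b'$ with $\dist(b,b')<\delta$ there is an $\varepsilon$-homeomorphism $h_{b'}\colon F\to p^{-1}(b')$. Set $U=O(b,\delta)$ and $W=O(e,\varepsilon')$ for a suitably tiny $\varepsilon'$; for $b'\in U$ the homeomorphism $h_{b'}$ carries the pair $(W\cap F)\subset(V\cap F)$ to a pair sandwiched between $W\cap p^{-1}(b')$ and $V\cap p^{-1}(b')$ up to an $\varepsilon$-error, because $h_{b'}$ moves points less than $\varepsilon$. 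Since the three properties in question (asphericity of a compactum, the coconnectedness condition, polyhedral $n$-connectedness) are topological invariants preserved by homeomorphisms, the corresponding property passes to the fiber $p^{-1}(b')$; choosing the radii in the order $V\rightsquigarrow W_0\rightsquigarrow \varepsilon\rightsquigarrow\delta\rightsquigarrow W$ with each smaller than the $\varepsilon$-tolerance of the previous one makes the inclusions $(W\cap p^{-1}(b'))\subset h_{b'}(W_0)\subset(V\cap p^{-1}(b'))$ hold, so $(W\cap p^{-1}(b'))\,\alpha\,(V\cap p^{-1}(b'))$ for the relevant $\alpha$. This is precisely the definition of $p^{-1}$ being equi locally of type $\alpha$.

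I would treat the three bullets in parallel but with attention to one genuine subtlety for the coconnectedness and hereditary-coconnected-asphericity items: these properties are not purely ``local'' in $F$ in the naive sense — they refer to complements inside some ambient space (a chart of $\mathbb R^3$ in which the fiber is embedded). The care needed is to observe that an $\varepsilon$-homeomorphism between two fibers, both sitting in a common small coordinate ball of $E$, can be extended or at least compared inside that ball, so that ``$K$ is non-separating in the ball'' and ``$W_0\subset V$ is a coconnected pair in the ball'' are transported correctly. I expect this bookkeeping — keeping the ambient chart fixed while the fiber moves, and verifying that the image of a non-separating compactum under the $\varepsilon$-homeomorphism is again non-separating in the (possibly enlarged) chart — to be the main obstacle; the asphericity-invariance and the $LC^2\Rightarrow$ polyhedral $2$-connectedness parts are routine given Corollary~\ref{lempolconnect} and Remark~\ref{remark hereditarily coconnectedly aspheric lemma}.
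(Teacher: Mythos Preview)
Your approach is essentially the paper's: use topological regularity to transport a good pair of neighbourhoods from the base fiber $p^{-1}(b)$ to nearby fibers via $\varepsilon$-homeomorphisms, then sandwich the metric balls between the transported sets. The paper does this in one stroke by choosing, in the base fiber, a concrete chain $D^3_q\subset O^3_q$ with $D^3_q$ a closed $3$-ball and $O^3_q$ homeomorphic to $\mathbb R^3$; after transporting by an $\varepsilon$-homeomorphism $h$ one has $O(q,\delta)\cap p^{-1}(x)\subset h(D^3_q)\subset h(O^3_q)\subset O(q,\varepsilon)\cap p^{-1}(x)$, and this single chain witnesses all three properties simultaneously (hereditary coconnected asphericity from $h(O^3_q)\cong\mathbb R^3$ via Remark~\ref{remark hereditarily coconnectedly aspheric lemma}; coconnectedness of the pair $h(D^3_q)\subset h(O^3_q)$; polyhedral $2$-connectedness from contractibility of $h(D^3_q)$). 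Your abstract version, treating the three $\alpha$'s in parallel, lands in the same place.

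The ``main obstacle'' you anticipate is not actually there, and your phrasing of it is off. The ambient space $Z$ in the coconnectedness definition is the fiber $p^{-1}(b')$ itself, not a chart of $\mathbb R^3$ or a coordinate ball in $E$; likewise hereditary coconnected asphericity is an intrinsic property of the subspace $W\cap p^{-1}(b')$. Since the $\varepsilon$-homeomorphism $h_{b'}$ is a homeomorphism of the \emph{entire} compact fiber $p^{-1}(b)$ onto the entire fiber $p^{-1}(b')$, it carries complements to complements and there is nothing to extend or compare inside an ambient ball of $E$. So the bookkeeping you worry about collapses: once you have the right chain in $p^{-1}(b)$, its $h_{b'}$-image is automatically the right chain in $p^{-1}(b')$, and only the metric-ball sandwiching (which you already outline) needs care.
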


\begin{proof}
Fix a point $q\in E$ and $\varepsilon>0$. We will find $\delta>0$
such that for any point $x\in p(O(q,\delta))$ there exist subsets
$D^3$ and $O^3$ of the fiber $p^{-1}(x)$ such that
\[  O(q,\delta)\cap p^{-1}(x)\subset D^3\subset O^3\subset
    O(q,\varepsilon)\cap p^{-1}(x)  \]
where $D^3$ is homeomorphic to closed 3-ball and $O^3$ is
homeomorphic to $\mathbb R^3$. Then the first property of $p^{-1}$
follows from the fact that $O^3$ is hereditarily coconnectedly
aspheric (see Remark~\ref{remark hereditarily coconnectedly aspheric lemma}).
The last two properties follow
from coconnectedness of the pair $D^3\subset O^3$ and
contractibility of $D^3$ respectively.

Take a neighborhood $O^3_q$ of the point $q$ in the fiber $p^{-1}(p(q))$ such
that $O^3_q$ is homeomorphic to $\mathbb R^3$ and is contained in $O(q,\varepsilon/2)$.
Note that if $h$ is $\varepsilon/2$-homeomorphism of $O^3_q$, then
$h(O^3_q)$ is contained in $O(q,\varepsilon)$.
Let $D^3_q$ be a neighborhood of $q$ in $p^{-1}(p(q))$
homeomorphic to closed 3-ball.
Take a number $\sigma>0$ such that $O(q,\sigma)\cap p^{-1}(p(q))$
is contained in $D^3_q$.
Choose a positive number $\delta<\sigma/2$ such that for any point
$x\in O(p(q),\delta)$ there exists $\sigma/2$-homeomorphism
of the fiber $p^{-1}(p(q))$ onto $p^{-1}(x)$.
Now take a point $x\in p(O(q,\delta))$ and fix $\sigma/2$-homeomorphism $h$
of the fiber $p^{-1}(p(q))$ onto $p^{-1}(x)$.
By the choice of $\sigma$, the set $h(D^3_q)$ contains
$O(q,\sigma/2)\cap p^{-1}(x)$. Therefore, we have
\[  O(q,\delta)\cap p^{-1}(x)\subset h(D^3_q)\subset h(O^3_q)\subset
    O(q,\varepsilon)\cap p^{-1}(x).  \]
\end{proof}

\section{Fibrations with 3-manifold fibers}\label{34}

\begin{lem} \label{lemma3shrinking}
Let $F\colon X\to Y$ be equi locally hereditarily coconnectedly
aspheric, equi locally coconnected, equi-$LC^1$ compact-valued
mapping of $ANR$-space $X$ to Banach space $Y$. Suppose that a
compact submapping $\Psi\colon A\to Y$ of $F|_A$ is defined on a
compactum $A\subset X$ and admits continuous approximations. Then
for any $\varepsilon>0$ there exists a neighborhood $OA$ of $A$
and a compact submapping $\Psi'\colon OA\to Y$ of $F|_{OA}$ such
that $\Gamma_{\Psi'}\subset O(\Gamma_\Psi,\varepsilon)$, $\Psi'$
admits a compact approximately connected filtration of infinite
length, and $\cal \Psi'<\varepsilon$.
\end{lem}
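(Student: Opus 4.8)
The plan is to build $\Psi'$ in stages, using coconnectification to "fill in" fibers and make them approximately aspheric while keeping control of diameters.

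The plan is to exhibit $\Psi'$ as the top term of an infinite, compact, approximately connected filtration. Guided by a continuous approximation of $\Psi$, I will cut $F$ down to small canonical pieces, assemble from them a length-$3$ filtration of complete mappings that is polyhedrally connected and equi locally connected, apply the Compact Filtration Lemma~\ref{L31} to obtain a compact approximately connected subfiltration, and then ``close off'' its top term by a coconnectification so that the resulting fibers become approximately aspheric; repeating that last term extends the filtration to infinite length, and $\Psi'$ is its top term.

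\emph{Localization.} Since $\Psi$ admits continuous approximations, pick a continuous $\psi\colon A\to Y$ with $\Gamma_\psi$ inside a prescribed small neighborhood of $\Gamma_\Psi$ and, using that $Y$ is an $AR$ and $X\in ANR$, extend $\psi$ to a continuous $\bar\psi$ over a neighborhood $OA_0$ of $A$; shrinking $OA_0$ we may assume $\Gamma_{\bar\psi}\subset O(\Gamma_\Psi,\varepsilon/4)$ and, by upper semicontinuity of the compact-valued $F$, that $\bar\psi(x)$ is as close to $F(x)$ as we like. Now $F$ is equi locally polyhedrally $1$-connected (Lemma~\ref{lempolcont}); applying Lemma~\ref{L32} to $F$ with the compact submapping $\Psi$ for the orderings (3), (4) and (5), and using compactness of $\Gamma_\Psi$ together with uniform continuity of $\bar\psi$, we fix radii $0<r_0<r_1<r_2<r_3<\varepsilon/2$ and a neighborhood $OA\subset OA_0$ of $A$ such that, for all $x\in OA$ --- writing $B_i(x)$ for the intersection of $F(x)$ with the open $r_i$-ball about $\bar\psi(x)$ and $\overline{B}_i(x)$ for the closed version --- one has: $B_0(x)\ne\emptyset$; the pair $B_0(x)\subset B_1(x)$ is polyhedrally $1$-connected; the pair $\overline{B}_1(x)\subset\overline{B}_3(x)$ is coconnected in $F(x)$; and $\overline{B}_3(x)$ is hereditarily coconnectedly aspheric. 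Every $B_i$ and $\overline{B}_i$ is a complete submapping of $F|_{OA}$, and the open ones inherit equi-$LC^0$ and equi-$LC^1$ from $F$, since an open metric neighborhood of the approximation does not disturb equi-local connectivity at small scales.

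\emph{Building the filtration and closing off.} Put $F_0:=B_0$, $F_1:=B_1$, and let $F_2$ be the $\overline{B}_3$-coconnectification of $\overline{B}_1$ (the multivalued coconnectification defined before Lemma~\ref{lemmacocon}), which fills in the bounded complementary components of $\overline{B}_1(x)$ in $F(x)$. By Lemma~\ref{lemmacocon}, $F_2$ is a compact-valued submapping of $\overline{B}_3$; moreover $F(x)\setminus F_2(x)$ is connected, so $F_2(x)$ is a non-separating compactum of the hereditarily coconnectedly aspheric set $\overline{B}_3(x)$. Thus $\{F_0\subset F_1\subset F_2\}$ is a filtration of complete mappings; it is equi locally connected --- $F_0$ is equi-$LC^0$, $F_1$ is equi-$LC^1$, and $F_2$ is equi-$LC^2$ because filling the bounded complementary gaps inside a hereditarily coconnectedly aspheric set removes the local two-dimensional obstructions --- and polyhedrally connected --- the pair $F_0(x)\subset F_1(x)$ is polyhedrally $1$-connected by the choice of $r_0,r_1$, and the pair $F_1(x)\subset F_2(x)$ is polyhedrally $2$-connected because any map of a $2$-complex into $F_1(x)$ extends into $F_2(x)$, the $2$-cells being fillable as $F_2(x)$ is non-separating in the hereditarily coconnectedly aspheric $\overline{B}_3(x)$. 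Lemma~\ref{L31} therefore produces a compact approximately connected subfiltration $\Psi_0'\subset\Psi_1'\subset\Psi_2'$ with $\Psi_i'\subset F_i$, so the pairs $\Psi_0'(x)\subset\Psi_1'(x)$ and $\Psi_1'(x)\subset\Psi_2'(x)$ are approximately $0$- and $1$-aspheric. Finally set $\Psi'$ equal to the $\overline{B}_3$-coconnectification of $\Psi_2'$, again a compact-valued submapping of $\overline{B}_3\subset F|_{OA}$ by Lemma~\ref{lemmacocon}. A coconnectification only enlarges fibers, hence $\Psi_1'(x)\subset\Psi_2'(x)\subset\Psi'(x)$; since approximate $n$-asphericity of a pair is preserved when the larger set of the pair is enlarged, the pairs $\Psi_0'(x)\subset\Psi_1'(x)$ and $\Psi_1'(x)\subset\Psi'(x)$ are still approximately $0$- and $1$-aspheric, while $\Psi'(x)$, being a non-separating compactum of the hereditarily coconnectedly aspheric $\overline{B}_3(x)$, is approximately aspheric. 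Consequently $\Psi_0'\subset\Psi_1'\subset\Psi'\subset\Psi'\subset\cdots$ is a compact approximately connected filtration of $\Psi'$ of infinite length. The remaining assertions are clear: $\Psi'(x)\subset\overline{B}_3(x)$ lies within distance $r_3$ of $\bar\psi(x)$ and $(x,\bar\psi(x))\in O(\Gamma_\Psi,\varepsilon/4)$, so $\Gamma_{\Psi'}\subset O(\Gamma_\Psi,\varepsilon/4+r_3)\subset O(\Gamma_\Psi,\varepsilon)$, and $\cal \Psi'\le\sup_{x}\diam\overline{B}_3(x)\le 2r_3<\varepsilon$.

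\emph{The main obstacle.} The heart of the matter is the analysis of the coconnectification of the top term: one must check that $F_2$ (and then $\Psi'$) is genuinely compact-valued and upper semicontinuous, that its fibers really do sit as non-separating compacta inside hereditarily coconnectedly aspheric pieces of $F(x)$ --- which forces a careful simultaneous choice of the radii $r_i$, reconciling the open balls used for equi-local connectivity with the closed balls used in the coconnectifications --- and, above all, that filling in the bounded complementary components there genuinely makes $F_2$ equi-$LC^2$, the pair $F_1\subset F_2$ polyhedrally $2$-connected, and $\Psi'(x)$ approximately aspheric. This is precisely the step at which the three-dimensional phenomenon highlighted in the Introduction, that open subsets of $\mathbb R^3$ need not be aspheric, is absorbed by the hereditarily coconnected asphericity hypothesis --- in place of the free use of planarity available in the two-dimensional argument of~\cite{BCS}.
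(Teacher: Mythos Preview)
Your overall strategy matches the paper's---build a short polyhedrally connected, equi locally connected filtration of complete mappings, feed it to Lemma~\ref{L31}, then coconnectify the top compact term inside a hereditarily coconnectedly aspheric slice so that it becomes approximately aspheric and can be repeated to infinite length. The divergence is that you coconnectify \emph{before} invoking Lemma~\ref{L31} (your $F_2$ is already the $\overline{B}_3$-coconnectification of $\overline{B}_1$), whereas the paper uses a \emph{third open-ball level} $G_2(x)=O(f'(x),\varepsilon_2)\cap F(x)$ for the top of the pre-\ref{L31} filtration and coconnectifies only afterwards.

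That early coconnectification is where the gap lies. To apply Lemma~\ref{L31} you need $F_2$ to be equi-$LC^2$ and the pair $F_1(x)\subset F_2(x)$ to be polyhedrally $2$-connected; your justifications for both are not valid. Hereditary coconnected asphericity of $\overline{B}_3(x)$ says that a non-separating compactum $K\subset\overline{B}_3(x)$ is \emph{approximately} aspheric: for every neighborhood $U$ of $K$ in an ambient ANR there is a smaller neighborhood $V$ with $V\subset U$ $n$-aspheric. It does \emph{not} say that $2$-spheres in $K$ bound in $K$, so it gives neither local $2$-connectedness of $F_2$ nor polyhedral $2$-connectedness of $F_1\subset F_2$. (Your radius $r_2$ is introduced and then never used---a symptom of the missing third level.) The paper sidesteps this entirely: with $G_0\subset G_1\subset G_2$ all cut out by open balls, the equi-$LC$ and polyhedral connectedness conditions for Lemma~\ref{L31} are read off directly from the equi local properties of $F$ via Lemma~\ref{L32}; only after Lemma~\ref{L31} produces a compact $F_0\subset F_1\subset F_2$ is the top term coconnectified (inside a larger $F_3$ furnished by the equi local coconnectedness) to obtain $F_4=\Psi'$, and at that stage approximate asphericity is exactly the property required. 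Your second coconnectification (of $\Psi_2'$) is fine and matches the paper; it is the first one that should be replaced by an honest third ball level.
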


\begin{proof}
Fix a positive number $\varepsilon$. Apply Lemma~\ref{L32} with
$\alpha$ being equi local hereditary coconnected asphericity to
get a positive number $\varepsilon_3<\varepsilon$. Apply
Lemma~\ref{L32} with $\alpha$ being lower coconnectedness to get a
positive number $\varepsilon_2<\varepsilon_3/2$. By
Lemma~\ref{lempolcont} the mapping $F$ is lower polyhedrally
2-continuous. Subsequently applying Lemma~\ref{L32} with $\alpha$
being polyhedral $n$-continuity for $n=2,1,0$, we find positive
numbers $\varepsilon_1$, $\varepsilon_0$, and $\delta$ such that
$\delta<\varepsilon_0<\varepsilon_1<\varepsilon_2$ and for every
point $(x,y)\in O(\Gamma_{\Psi},\delta)$ the pair
$(O(y,\varepsilon_1)\cap F(x),O(y,\varepsilon_2)\cap F(x))$ is
polyhedrally 2-connected, the pair $(O(y,\varepsilon_0)\cap
F(x),O(y,\varepsilon_1)\cap F(x))$ is polyhedrally 1-connected,
and the intersection $O(y,\varepsilon_0)\cap F(x)$ is not empty.

Let $f\colon K\to Y$ be a continuous single-valued mapping
whose graph is contained in $O(\Gamma_{\Psi},\delta)$.
Let $f'\colon\mathcal OK\to Y$ be a continuous extension of the mapping $f$
over some neighborhood $\mathcal OK$ such that the graph of $f'$
is contained in $O(\Gamma_{\Psi},\delta)$.
Now we can define a polyhedrally connected filtration
$G_0\subset G_1\subset G_2\colon \mathcal OK\to Y$
of the mapping $F|_{\mathcal OK}$ by the equality
\[
G_i(x)=O(f'(x),\varepsilon_i)\cap F(x).
\]
Since the set $\cup_{x\in\mathcal OK}\{\{x\}\times
O(f'(x),\varepsilon_i)\}$ is open in the product $\mathcal
OK\times Y$ and the mapping $F$ is complete, then $G_i$ is also
complete. Clearly, the set $G_2^\Gamma(x)$ is contained in
$O(\Gamma_\Psi,2\varepsilon_2)$. Now, applying Lemma~\ref{L31} to
the filtration $G_0\subset G_1\subset G_2$, we obtain a compact
approximately connected subfiltration $F_0\subset F_1\subset
F_2\colon \mathcal OK\to Y$. By the choice of $\varepsilon_2$ we
can find upper semicontinuous closed-valued mapping $F_3$
containing $F_2$ such that the pair $F_2(x)\subset F_3(x)$ is
coconnected in $F(x)$ for any $x\in X$. Since the map $F$ is
compact-valued, $F_3$ is compact. Using Lemma~\ref{lemmacocon} and
the choice of $\varepsilon_3$ we find a coconnectification $F_4$
of $F_2$ inside $F_3$. Then $F_4$ is compact submapping of $F$
having approximately aspheric point-images. Therefore, the
infinite filtration $F_0\subset F_1\subset F_4\subset F_4\subset
F_4\ldots$ is approximately connected and we can put $\Psi'=F_4$.
\end{proof}

\begin{thm} \label{thm3approxsections}
Let $F\colon X\to Y$ be equi locally hereditarily coconnectedly
aspheric, equi locally coconnected, equi-$LC^1$ compact-valued
mapping of locally compact $ANR$-space $X$ to Banach space $Y$.
Suppose that a compact submapping $\Psi\colon A\to Y$ of $F|_A$ is
defined on a compactum $A\subset X$ and admits continuous
approximations. Then for any $\varepsilon>0$ there exists a
neighborhood $OA$ of $A$ and a single-valued continuous selection
$s\colon OA\to Y$ of $F|_{OA}$ such that $\Gamma_s\subset
O(\Gamma_\Psi,\varepsilon)$.
\end{thm}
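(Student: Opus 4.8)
The plan is to build the single-valued selection as a uniform limit of a sequence of compact submappings with gauges tending to zero, using Lemma \ref{lemma3shrinking} as the engine for the iteration and Theorem \ref{thmapprox} to keep producing continuous approximations at each stage.

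First I would apply Lemma \ref{lemma3shrinking} to the data $(F,\Psi,\varepsilon/2)$ to obtain a neighborhood $O_1 A$ of $A$ and a compact submapping $\Psi_1\colon O_1A\to Y$ of $F|_{O_1A}$ with $\Gamma_{\Psi_1}\subset O(\Gamma_\Psi,\varepsilon/2)$, gauge $\cal\Psi_1<\varepsilon/2$, and admitting a compact approximately connected filtration of infinite length. Shrinking if necessary, I may assume $\overline{O_1A}$ is compact (here is where local compactness of $X$ is used) and work over $\overline{O_1A}$. By Theorem \ref{thmapprox} applied to $\Psi_1$ over the compact set $\overline{O_1A}$, every neighborhood of $\Gamma_{\Psi_1}$ contains the graph of a continuous single-valued $f_1$; in particular $\Psi_1$ admits continuous approximations in the sense required to reapply Lemma \ref{lemma3shrinking}. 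Now iterate: having constructed a compact submapping $\Psi_k\colon O_kA\to Y$ of $F$ with $\cal\Psi_k<\varepsilon/2^k$, admitting continuous approximations and a compact approximately connected filtration of infinite length, apply Lemma \ref{lemma3shrinking} with the target bound $\varepsilon_{k+1}:=\min\{\varepsilon/2^{k+1},\,\operatorname{dist}(\Gamma_{\Psi_k},X\times Y\setminus O(\Gamma_{\Psi},\varepsilon))\}$ to get $\Psi_{k+1}$ over a neighborhood $O_{k+1}A\subset O_kA$ with $\Gamma_{\Psi_{k+1}}\subset O(\Gamma_{\Psi_k},\varepsilon_{k+1})$, $\cal\Psi_{k+1}<\varepsilon/2^{k+1}$, admitting an infinite approximately connected compact filtration; and again by Theorem \ref{thmapprox} it admits continuous approximations. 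Set $OA=\bigcap_k O_kA$, which is a $G_\delta$ neighborhood of $A$; if one wants an honest open neighborhood, one replaces the $O_kA$ at the start by a single fixed relatively compact one and only shrinks the domains of the $\Psi_k$ as \emph{subsets of the graphs}, not the base neighborhood.

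The graphs $\Gamma_{\Psi_k}$ form a decreasing-in-the-Hausdorff-metric sequence over $A$ (and over $OA$), since $\Gamma_{\Psi_{k+1}}\subset O(\Gamma_{\Psi_k},\varepsilon_{k+1})$ with $\sum_j\varepsilon_j<\infty$ and the gauges shrink geometrically. Concretely, for each $x\in OA$ and each choice $y_k\in\Psi_k(x)$ the sequence $\{y_k\}$ is Cauchy in $Y$ because $\operatorname{dist}(y_k,\Psi_{k+1}(x))<\varepsilon_{k+1}$ forces $\operatorname{dist}(y_k,y_{k+1})<\varepsilon_{k+1}+\cal\Psi_{k+1}$, and $Y$ is a Banach space, so the limit $s(x):=\lim_k y_k$ exists and is independent of the choices because $\cal\Psi_k\to 0$. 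Thus $s\colon OA\to Y$ is well-defined and single-valued, and $s(x)\in\bigcap_k\overline{\Psi_k(x)}\subset F(x)$ since $F$ is compact-valued (closed-valued) and each $\Psi_k(x)\subset F(x)$; hence $s$ is a selection of $F|_{OA}$. Continuity of $s$ follows from the uniform convergence: each $\Psi_k$ is upper semicontinuous with compact values, so the ``branches'' vary upper-semicontinuously, and $s$ is a uniform (over $OA$) limit of maps that are within $\cal\Psi_k\to 0$ of being continuous; more carefully, given $x_0$ and $\eta>0$, pick $k$ with $\cal\Psi_k+2\varepsilon_{>k}<\eta/3$ where $\varepsilon_{>k}=\sum_{j>k}\varepsilon_j$, choose a continuous approximation $f_k$ of $\Psi_k$ within $\delta$ of $\Gamma_{\Psi_k}$ with $\delta$ small, and use $\|s(x)-f_k(x)\|\le \cal\Psi_k+\varepsilon_{>k}+\delta$ together with continuity of $f_k$ near $x_0$. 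Finally $\Gamma_s\subset O(\Gamma_{\Psi},\varepsilon)$ because $s(x)$ lies within $\sum_{j\ge 1}(\varepsilon_j+\cal\Psi_j)<\varepsilon$ of $\Psi(x)$ by construction (the bound $\varepsilon_1<\varepsilon/2$ on the first step plus the geometric tail).

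The main obstacle is the bookkeeping that keeps the hypotheses of Lemma \ref{lemma3shrinking} available at \emph{every} stage: Lemma \ref{lemma3shrinking} requires a compact submapping of $F|_A$ \emph{that admits continuous approximations}, and it outputs one that admits a compact approximately connected filtration of infinite length --- it is Theorem \ref{thmapprox} that converts the latter back into the former, but only over a \emph{compact} subset of the domain, which is why local compactness of $X$ enters and why one must arrange the domains $O_kA$ to have compact closure before invoking Theorem \ref{thmapprox}. A secondary subtlety is ensuring the nested neighborhoods $O_kA$ still contain a neighborhood of $A$ in the limit; this is handled by doing all the shrinking inside one fixed relatively compact neighborhood $O_0A$ of $A$ and controlling the Hausdorff distances $\Gamma_{\Psi_{k+1}}\subset O(\Gamma_{\Psi_k},\varepsilon_{k+1})$ so that $A$ stays in the domain --- formally, one checks that $\Psi_k$ remains defined on a fixed neighborhood of $A$ because $\Psi$ itself is, and Lemma \ref{lemma3shrinking} only ever passes to a smaller neighborhood, never loses $A$.
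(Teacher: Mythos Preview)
Your approach is the paper's: iterate Lemma~\ref{lemma3shrinking}, invoke Theorem~\ref{thmapprox} over a compact set to recover the ``admits continuous approximations'' hypothesis at each stage, and pass to the limit. The paper resolves your neighborhood bookkeeping more directly than your self-correction does: after the \emph{first} application of Lemma~\ref{lemma3shrinking} it fixes once and for all a compact neighborhood $OA\subset U_1$ of $A$ (using local compactness), and at every subsequent step applies Lemma~\ref{lemma3shrinking} to $\Psi_k|_{OA}$ with domain the \emph{same} compactum $OA$; each output $\Psi_{k+1}$ lives on some neighborhood $U_{k+1}\supset OA$, but one only ever uses its restriction to $OA$, so no nested intersection $\bigcap_k O_kA$ is taken and the $G_\delta$ issue simply does not arise. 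One minor correction in your convergence step: the assertion $s(x)\in\bigcap_k\overline{\Psi_k(x)}$ is unjustified since the sets $\Psi_k(x)$ are not nested; the correct (and simpler) reason that $s(x)\in F(x)$ is that each $y_k\in\Psi_k(x)\subset F(x)$ and $F(x)$ is compact, hence closed. Continuity of $s$ then follows, as in the paper, directly from upper semicontinuity of the $\Psi_k$ together with $\cal\Psi_k\to 0$, without the auxiliary approximations $f_k$.
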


\begin{proof}
Fix $\varepsilon>0$. By Lemma~\ref{lemma3shrinking} there is a
neighborhood $U_1$ of $A$ in $X$ and a compact submapping
$\Psi_1\colon U_1\to Y$ of $F|_{U_1}$ such that
$\Gamma_{\Psi_1}\subset O(\Gamma_\Psi,\varepsilon)$, $\Psi_1$
admits a compact approximately connected filtration of infinite
length, and $\cal \Psi_1<\varepsilon$. Since $X$ is locally
compact and $A$ is compact, there exists a compact neighborhood
$OA$ of $A$ such that $OA\subset U_1$. By Theorem~\ref{thmapprox}
the mapping $\Psi_1|_{OA}$ admits continuous approximations. Take
$\varepsilon_1<\varepsilon$ such that the neighborhood $\mathcal
U_1=O(\Gamma_{\Psi_1}(OA),\varepsilon_1)$ lies in
$O(\Gamma_\Psi,\varepsilon)$.

Now by induction with the use of Lemma~\ref{lemma3shrinking}, we
construct a sequence of neighborhoods $U_1\supset U_2\supset
U_3\supset\dots$ of the compactum $OA$, a sequence of compact
submappings $\{\Psi_k\colon U_k\to Y\}^\infty_{k=1}$ of the
mapping $F$, and a sequence of neighborhoods $\mathcal
U_k=O(\Gamma_{\Psi_1}(OA),\varepsilon_k)$ such that for every
$k\ge 2$ we have $\cal
\Psi_k<\varepsilon_{k-1}/2<\varepsilon/2^k$, and $\mathcal
U_k(OA)$ is contained in $\mathcal U_{k-1}(OA)$. It is not
difficult to choose the neighborhood $\mathcal U_k$ of the graph
$\Gamma_{\Psi_k}$ in such a way that for every point $x\in U_k$
the set $\mathcal U_k(x)$ has diameter less than $\frac{3}{2^k}$.

Then for any $m\ge k\ge 1$ and for any point $x\in OA$ we have
$\Psi_m(x)\subset O(\Psi_k(x),\frac{3}{2^k})$; this implies the
fact that the sequence of compacta $\{\Psi_k(x)\}_{k=1}^\infty$ is Cauchy
(in Hausdorff metric).
Since $\Psi_k(x)\subset F(x)$ for all $k$ and every $x\in OA$, there exists a limit $s(x)\in F(x)$ of
this sequence (recall that $F(x)$ is compact). The mapping $s\colon OA\to Y$ is single-valued by the condition
$\cal\Psi_k<\frac{1}{2^k}$ and is upper semicontinuous (and,
therefore, is continuous) by the upper semicontinuity of all the
mappings $\Psi_k$. Thus $s$ is a selection of the mapping $F$.
\end{proof}

\begin{thm}
Let $p\colon E\to B$ be a Serre fibration of $LC^2$-compacta with
all fibers homeomorphic to some fixed compact three-dimensional
manifold. If $B\in ANR$, then any section of $p$ over closed
subset $A\subset B$ can be extended to a section of $p$ over some
neighborhood of $A$.
\end{thm}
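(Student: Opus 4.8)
The plan is to deduce the theorem from Theorem~\ref{thm3approxsections}, applied to the multivalued inverse $F=p^{-1}$. First I would verify its hypotheses. Since the Poincar\'e Conjecture holds, $p$ is topologically regular (as observed before Lemma~\ref{lemmafibrprop}), so by Lemma~\ref{lemmafibrprop} the compact-valued mapping $p^{-1}\colon B\to E$ is equi locally hereditarily coconnectedly aspheric, equi locally coconnected, and equi locally polyhedrally $2$-connected; the last of these implies equi-$LC^1$, because polyhedral $2$-connectedness of a pair $V\subset U$, tested on the polyhedra $(D^2,S^1)$, $(D^1,S^0)$ and a point, yields exactly $1$-, $0$- and $(-1)$-asphericity of $V\subset U$. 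Next I would embed the compactum $E$ into a separable Banach space $Y$ (say into $C(E)$ by $e\mapsto\dist_E(e,\cdot)$, or into $\ell^2$ through the Hilbert cube) and view $F=p^{-1}$ as a compact-valued mapping $B\to Y$ with images in $E$; the three properties above pass to $F\colon B\to Y$ because for $y\in p^{-1}(x)\subset E$ and a neighbourhood $V$ of $y$ in $Y$ one has $W\cap p^{-1}(x')=(W\cap E)\cap p^{-1}(x')$ for every $W\subset Y$, so the required relations between $W\cap F(x')$ and $V\cap F(x')$ reduce to those inside $E$ via the axioms of a proper ordering (hereditary coconnected asphericity being an intrinsic property of a set). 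As $B$ is a compact, hence locally compact, $ANR$, the hypotheses of Theorem~\ref{thm3approxsections} are met by $F$, $X=B$, $Y$.

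Now let $\sigma\colon A\to E$ be a section of $p$ over a closed (hence, $B$ being compact, compact) subset $A\subset B$. Then $p\circ\sigma=\id_A$, so $\sigma(x)\in p^{-1}(x)=F(x)$ for each $x\in A$, i.e. $\sigma$ is a single-valued compact submapping $\Psi=\sigma$ of $F|_A$, and it trivially admits continuous approximations since its own graph lies in every neighbourhood of $\Gamma_\Psi$. By Theorem~\ref{thm3approxsections} (with any $\varepsilon>0$) there are a neighbourhood $OA$ of $A$ in $B$ and a single-valued continuous selection $s\colon OA\to Y$ of $F|_{OA}$; since $s(x)\in F(x)=p^{-1}(x)$ we get $p(s(x))=x$, so $s$ is a section of $p$ over the neighbourhood $OA$ of $A$.

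The point still to be settled is that $s$ actually \emph{extends} $\sigma$, i.e. $s|_A=\sigma$, which Theorem~\ref{thm3approxsections} does not guarantee as stated. I would run the construction of Theorem~\ref{thm3approxsections} in a relative form tied to $\sigma$: at every stage one keeps the compact submapping $\Psi_k$ containing $\sigma$ over $A$ (so $\sigma(x)\in\Psi_k(x)$ for $x\in A$) and chooses the continuous approximation used at the next stage to equal $\sigma$ on $A$. This requires relative versions of the two tools involved: a relative Compact Filtration Lemma (Lemma~\ref{L31}), producing a compact approximately connected subfiltration that still contains a prescribed single-valued submapping over a closed subset on which it already lies in the first term of the filtration, and a relative form of Theorem~\ref{thmapprox}, allowing the continuous approximation of a compact approximately connected filtration to be chosen to agree over a closed subset with a prescribed single-valued submapping that it contains; both are obtainable by the methods of~\cite{SB} and of the proof of Theorem~\ref{thmapprox}. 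Granting them, in Lemma~\ref{lemma3shrinking} one takes $f=\sigma$ on $A$, so $G_i(x)=O(f'(x),\varepsilon_i)\cap F(x)\ni\sigma(x)$ for $x\in A$; the relative Compact Filtration Lemma keeps $\sigma|_A$ inside $F_0\subset F_1\subset F_2$, and the coconnectification $F_4\supset F_2$ retains it; iterating, every $\Psi_k$ in the proof of Theorem~\ref{thm3approxsections} contains $\sigma$ over $A$ while $\cal\Psi_k<\varepsilon/2^k\to 0$, so for $x\in A$ the compacta $\Psi_k(x)$ shrink to the single point $\sigma(x)$ and the limit selection satisfies $s|_A=\sigma$. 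Thus $s$ is a section of $p$ over a neighbourhood of $A$ extending $\sigma$, which proves the theorem. The main obstacle is exactly the relative Compact Filtration Lemma; once it (and the corresponding relative form of Theorem~\ref{thmapprox}) is in place, the remainder is bookkeeping along the lines of the proofs of Lemma~\ref{lemma3shrinking} and Theorem~\ref{thm3approxsections}.
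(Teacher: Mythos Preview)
Your reduction to Theorem~\ref{thm3approxsections} and the verification of its hypotheses via Lemma~\ref{lemmafibrprop} are fine, and you correctly identify the real issue: Theorem~\ref{thm3approxsections} as stated gives a selection near $A$ but not one extending $\sigma$. Your proposed fix---relative versions of Lemma~\ref{L31} and Theorem~\ref{thmapprox}---is plausible in spirit but is genuine extra work that you do not carry out, and it is not obviously trivial: the Compact Filtration Lemma of~\cite{SB} is a compactness/selection argument whose relative form (preserving a prescribed singleton-valued submapping over a closed set) would need to be checked carefully.

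The paper avoids all of this with a single device you are missing. Instead of taking $F=p^{-1}$, one \emph{collapses} $F$ over $A$ to the given section: define
\[
F(b)=\begin{cases}\{\sigma(b)\},& b\in A,\\ p^{-1}(b),& b\in B\setminus A.\end{cases}
\]
This $F$ is still compact-valued, and the equi-local properties supplied by Lemma~\ref{lemmafibrprop} persist: at points $(x,\sigma(x))$ with $x\in A$ the fiber $F(x)$ is a singleton, for which all the relevant pair conditions hold trivially, while at nearby $x'\notin A$ one has $F(x')=p^{-1}(x')$ and Lemma~\ref{lemmafibrprop} applies verbatim. Now apply Theorem~\ref{thm3approxsections} to this $F$ with $\Psi=\sigma$. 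Any continuous selection $s$ of $F|_{OA}$ satisfies $s(x)\in F(x)=\{\sigma(x)\}$ for $x\in A$, so $s|_A=\sigma$ automatically. No relative filtration lemma is needed; the extension property is forced by the definition of $F$.
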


\begin{proof}
Note that the Serre fibration $p$ is topologically
regular~\cite{H}.

Let $s\colon A\to E$ be a section of $p$ over $A$. Embed $E$ into
Hilbert space $l_2$ and consider a compact-valued mapping $F\colon
B\to l_2$ defined as follows:
$$
   F(b)=\begin{cases} s(b), &\text{if \;$b\in A$}\\
               p^{-1}(b),   &\text{if \;$x\in B\setminus A$.}\end{cases}
$$
It follows from Lemma~\ref{lemmafibrprop} that the mapping $F$ is
equi locally hereditarily coconnectedly aspheric, equi locally
coconnected, and equi-$LC^1$. We can apply
Theorem~\ref{thm3approxsections} to the mapping $F$ and its
submapping $s$ to find a single-valued continuous selection
$\widetilde s\colon OA\to l_2$ of $F|_{OA}$. Since the restriction
$F|_{A}$ is single-valued and equal to $s$, we have $\widetilde
s|_{A}=s$. Clearly, $\widetilde s$ defines a section of the
fibration $p$ over $OA$ extending $s$.
\end{proof}


\begin{thebibliography}{99}



\bibitem{B} N.~Brodsky,
{\it Sections of maps with fibers homeomorphic to a two-dimensional manifold},
Topology Appl. {\bf 120} (2002), 77--83.

\bibitem{BCK} N.~Brodsky, A.~Chigogidze, A.~Karasev,
{\it Approximations and selections of multivalued mappings of
finite-dimensional space},
JP Journal of Geometry and Topology {\bf 2} (2002), 29--73.

\bibitem{BCS} N.~Brodsky, A.~Chigogidze, E.V.~Shchepin,
{\it Sections of Serre fibrations with 2-manifold fibers},
Topology Appl. {\bf 155} (2008), 773--782.

\bibitem{DS}   A.N.~Dranishnikov, E.V.~Shchepin,
{\it Cell-like mappings. The problem of increase of dimension},
Russian Math. Surveys {\bf 41}:6 (1986), 59--111.

\bibitem{DM} J.~Dugundji, E.~Michael,
{\it On local and uniformly local topological properties},
Proc. Amer. Math. Soc. {\bf 7} (1956), 304--307.

\bibitem{F} S.~Ferry,
{\it Strongly regular mappings with compact ${\rm ANR}$ fibers are Hurewicz fiberings},
Pacific J. Math. {\bf 75} (1978), 373--382.

\bibitem{H}   M.E.~Hamstrom,
{\it Regular mappings and the space of homeomorphisms on a
$3$-manifold}, Mem. Amer. Math. Soc. {\bf 40}, 1961.

\bibitem{HD}   M.E.~Hamstrom, E.~Dyer,
{\it Regular mappings and the space of homeomorphisms on a 2-manifold},
Duke Math. J. {\bf 25} (1958),  521--531.

\bibitem{Han} O.~Hanner,
{\it Some theorems on absolute neighborhood retracts},
Arciv. Mat. {\bf 1} (1951), 389--408.

\bibitem{Mi} E.~Michael,
{\it Continuous  Selections, II}, Ann. Math. {\bf 64} (1956),
562--580.

\bibitem{RS} D.~Repovs, P.V.~Semenov,
{\it Continuous selections of multivalued mappings},
Kluwer Academic Publishers, Dordrecht, 1998.

\bibitem{RSS} D.~Repovs, P.V.~Semenov, E.V.~Shchepin
{\it Topologically regular maps with fibers homeomorphic to
a one-dimensional polyhedron},
Houston J. Math. {\bf 23} (1997), 215--230.

\bibitem{S}  E.V.~Shchepin
{\it On homotopically regular mappings of manifolds},
Banach Centre Publ. 18, PWN, Warszawa, 1986, 139--151.

\bibitem{SB} E.V.~Shchepin, N.~Brodsky,
{\it Selections of filtered multivalued mappings},
Proc. Steklov Inst. Math. {\bf 212} (1996), 218--228.

\bibitem{Whitney}  H.~Whitney
{\it Regular families of curves},
Ann. Math. (2) {\bf 34} (1933), 244--270.

\end{thebibliography}
\end{document}